\documentclass[11pt, a4paper]{amsart}
\usepackage[english]{babel}
\usepackage{amsfonts,amsmath}	
\usepackage{epsfig}
\usepackage{graphicx}		
\usepackage{amsthm,amssymb}
\usepackage[subnum]{cases}
\usepackage{url}
\usepackage{verbatim}
\usepackage{texdraw}
\usepackage{cite}
\makeatletter
\def\url@leostyle{%
  \@ifundefined{selectfont}{\def\UrlFont{\sf}}{\def\UrlFont{\small\ttfamily}}}
\makeatother
\urlstyle{leo}
\input xy
\xyoption{all}				

\theoremstyle{definition}
\newtheorem{thm}{Theorem}

\newtheorem{lem}[thm]{Lemma}
\newtheorem{defi}[thm]{Definition}
\newtheorem{prop}[thm]{Proposition}

\newtheorem{rem}[thm]{Remark}
\title{Theta characteristics of hyperelliptic graphs}
\author{Marta Panizzut}
\email{{\tt marta.panizzut@wis.kuleuven.be}}
\address{KU Leuven, Department of Mathematics, Celestijnenlaan 200B, 3001 Heverlee, Belgium}
\thanks{The author is supported by the Research Project G.0939.13N of the Research Foundation - Flanders (FWO)}
\keywords{Theta characteristics, hyperelliptic graphs, hyperelliptic curves, specialization, harmonic morphisms}
\subjclass[2010]{14T05, 14H51, 14H45}

\begin{document}

\begin{abstract} We study theta characteristics of hyperelliptic metric graphs of genus $g$ with no bridge edges. These graphs have a harmonic morphism of degree two to a metric tree that can be  lifted to morphism of degree two of a hyperelliptic curve~$X$ over $K$ to the projective line, with $K$ an algebraically closed field of char$(K) \not =2$, complete with respect to a  non-Archimedean valuation, with residue field $k$ of char$(k)\not=2$. The hyperelliptic curve has $2^{2g}$ theta characteristics. We show that for each effective theta characteristics on the graph, $2^{g-1}$~even and $2^{g-1}$~odd theta characteristics on the curve specialize to it; and $2^g$~even theta characteristics on the curve specialize to the unique not effective theta characteristics on the graph.
\end{abstract}
\maketitle

\begin{section}{Introduction}
A theta characteristic of a smooth algebraic curve $X$ of genus~$g$ over an algebraically closed field $K$ of char$(K) \not =2$, is an element $\mathcal{L}$ of $\textrm{Pic}^{g-1}(X)$ such that $\mathcal{L}\otimes \mathcal{L} = \omega_X$. They are called odd or even according to the parity of $\textrm{h}^0(X, \mathcal{L})$. There are $2^{2g}$ theta characteristics, $2^{g-1}(2^g-1)$ of which are odd, and the parity of a theta characteristic does not change if $X$ varies in an algebraic family, see \cite{Mum}.

Theta characteristics on metric graphs have been introduced by Zharkov in \cite{Zha}. Given a metric graph $\Gamma$ of genus $g$, a {\sl theta characteristic} is a divisor class $[D]$ in $\textrm{Pic}^{g-1}(\Gamma)$ such that $2[D] = [K_{\Gamma}]$. Zharkov showed that there are $2^g$ theta characteristics and they are all but one effective, meaning that $D$ is linearly equivalent to an effective divisor. 

Chan and Jiradilok \cite{CJ} consider $K_4$-curves, namely smooth, proper curves of genus $3$ over a non-Archimedean field, whose Berkovich skeleton $\Gamma$ is a complete metric graph $K_4$ on four vertices. Such curves have $28$ bitangents to a canonical embedding. Each of these bitangent corresponds to an effective theta characteristic given by the sum of the intersection points. The metric graph $\Gamma$ has $7$ effective theta characteristics. In their article the authors show that the $28$ bitangents specialize to the effective theta characteristics in seven groups of four, answering, in the case of $K_4$-curve, a question posed by Baker, Len, Morrison, Pflueger and Ren in \cite{BLMPR}.

\vspace{\baselineskip}
Our starting point is a working hypothesis formulated by Marc Coppens and motivated by the results of Chan and Jiradilok. Let $\Gamma$ be the dual graph of a strongly semistable model of a smooth curve $X$ over an algebraically closed field $K$ of char$(K) \not =2$, complete with respect to a non-trivial, non-Archimedean valuation. For each effective theta characteristic on the graph, $2^{g-1}$ even and $2^{g-1}$ odd theta characteristics on the curve specialize to it; and $2^g$ even theta characteristics on the curve specialize to the unique non-effective theta characteristics on the graph.

We focus on theta characteristics on hyperelliptic graphs and curves.  We add the hypothesis that the residue field $k$ of $K$ is of char$(k) \not=2$. A hyperelliptic metric graph~$\Gamma$ of genus $g$ has a harmonic morphism of degree two to a metric tree. Under the hypothesis that all the bridge edges are contracted, this morphism has $g+1$ {\sl ramification points}, see Definition \ref{rp}. The harmonic morphism can always be  lifted to morphism of degree two of a hyperelliptic curve~$X$ over $K$ of genus $g$ to the projective line. This covering has $2g+2$ ramification points. The main result we will prove is Theorem~\ref{counting}, that states that the theta characteristics on the curve $X$ specialize to theta characteristics on the graph $\Gamma$ as described in the working hypothesis. 

In order to prove this result, we study the specialization of the ramification points on the curve $X$ to the metric graph~$\Gamma$, showing in Theorem \ref{ramification} that they specialize in pairs to the ramification points on~$\Gamma$. In Section \ref{thetacharacteristics} we prove that theta characteristics on a hyperelliptic graphs  can be described using the hyperelliptic series $g^1_2$ and the ramification points, as it is done for theta characteristics on hyperelliptic curves.

Combining the study of the specialization of ramifications points and these descriptions of theta characteristics, it is possible to count the number of even and odd theta characteristics on the curve that specialize to a given one on the graph. These computations are explained in Section \ref{5} and lead to the proof of Theorem \ref{counting}.

\end{section}
\begin{subsection}*{Acknowledgments} I am deeply grateful to Marc Coppens for sharing his working hypothesis with me, for his guidance and valuable comments on earlier drafts of this paper. I am also very grateful to the reviewer for many valuable remarks and in particular for pointing out my mistakes in a previous version of the proof of Theorem \ref{counting}.   I warmly thank Filip Cools and Farbod Shokrieh for fruitful discussions and many suggestions which improved this manuscript. 
\end{subsection}

\begin{section}{Preliminaries} \label{preliminaries}

\begin{subsection}{Metric graphs}
A {\sl topological graph $\Gamma$} is a compact, connected topological space such that for every $p \in \Gamma$ there exists a neighborhood $U_p$ of $p$ homeomorphic to a union of segments in $\mathbb{R}^2$ connecting the origin $(0,0)$ with $r$ points, every two of which lie in different lines through the origin. The number $r$ is called {\sl valence} and indicated with val$(p)$. Only for a finite number of points val$(p) \not =2$. These points are called {\sl essential vertices}. 
A {\sl metric graph} is a topological graph equipped with a complete inner metric on $\Gamma \setminus V_{\infty}(\Gamma)$, where $V_{\infty}(\Gamma)$ is a subset of vertices of valence one called {\sl infinite vertices}. The metric can also be extended to the infinite vertices, stating that their distance from all the other points of the graph is infinite.  

A vertex set $V(\Gamma)$ is a finite subset of points of $\Gamma$ containing the essential vertices. The closures of the connected components of $\Gamma \setminus V(\Gamma)$ are called {\sl edges}. The set $E(\Gamma)$ consists of the edges associated to $V(\Gamma)$. The {\sl infinite edges} are the edges adjacent to infinite vertices. A {\sl bridge edge} is an edge $e$ such that $\Gamma \setminus \mathring{e}$ is disconnected. From the metric it is possible to associate to every edge $e \in E(\Gamma)$  a length $l(e) \in \mathbb{R} \cup \{ \infty\}$. We have $l(e) = \infty$ if and only if $e$ is an infinite edge. 

Given a vertex set of a metric graph $\Gamma$, its genus $g$ is defined as the first Betti number, $g = |E(\Gamma)| - |V(\Gamma)| +1$. 

The set of {\sl tangent directions $T_p(\Gamma)$} at $p$ is the set of connected components of $U_p \setminus \{p\}$, where $U_p$ is a neighborhood of $p$ as defined above. If using another neighborhood $U_p'$, we identify the components where the intersection of $U_p \setminus \{p\}$ and $U_p' \setminus \{p\}$ is not empty.  

\vspace{\baselineskip}
A {\sl divisor} $D$ on a metric graph $\Gamma$ it is an element of the free abelian group $\textrm{Div}(\Gamma)$ on the points of the graph,  
\[
D = \sum_{p \in \Gamma} a_p \, (p), \ \ \textrm{with} \ a_p \in \mathbb{Z}. 
\]
The {\sl degree} of $D$ is the sum of its coefficients $\textrm{deg}(D) = \sum_{p \in \Gamma} a_p$. A divisor is {\sl effective} if $a_p \geq 0$ for every $p \in \Gamma$. The {\sl support $\textrm{supp}(D)$ of a divisor $D$} is the set of points of $\Gamma$ such that $a_p \not =0$. 

The {\sl canonical divisor} $K_{\Gamma}$ has at every point $p$ coefficient $\textrm{val}(p)-2$.

A {\sl rational function} $f$ on $\Gamma$ is a continuous piecewise linear function $f: \Gamma \rightarrow \mathbb{R}$ with integer slopes and only finitely many pieces. The {\sl principal divisor} $\textrm{div}(f)$ associated to $f$ is the divisor whose coefficient at $p$ is given by the sum of the outgoing slopes of $f$ at that point. 

Two divisor $D_1$, $D_2 \in \textrm{Div}(\Gamma)$ are {\sl linearly equivalent}, $D_1 \sim D_2$, if there exists a rational function $f$ such that $D_1  - D_2 = \textrm{div}(f)$. The group of divisor classes of degree $g-1$ is indicated with $\textrm{Pic}^{g-1}(\Gamma)$. The {\sl linear system} $|D|$ is the set of the effective divisor linearly equivalent to $D$. 

The {\sl rank $\textrm{rk}_{\Gamma}(D)$ of a divisor} is defined as $-1$ if $D$ is not equivalent to any effective divisor, otherwise
\[
\textrm{rk}_{\Gamma}(D)= \max \{ r \in \mathbb{Z}_{\geq 0}| \ |D-E| \not = \emptyset \ \ \  \forall \ E \in \textrm{Div}(\Gamma), \ E \geq 0, \ \textrm{deg}(E)=r\}. 
\]

Let $\Gamma$ be a metric graph and $X$ be a closed connected subset of $\Gamma$. Given $p \in \partial X$, the {\sl outgoing degree of $X$ at $p$} is defined as the number of tangent directions leaving $X$ at $p$. Let $D$ be a divisor on $\Gamma$. A boundary point $p \in \partial X$ is {\sl saturated with respect to $X$ and $D$} if $D(p) \geq \textrm{outdeg}_X(p)$, and {\sl non-saturated} otherwise. Let $q$ be a point of $\Gamma$. 
A divisor $D$ is {\sl $q$-reduced} if it is effective in $\Gamma \setminus \{q\}$ and each closed connected subset $X$ of $ \Gamma \setminus \{ q \}$ contains a non-saturated boundary point.

\begin{thm}[Proposition 7.5 in \cite{MZ}]
Let $D$ be a divisor on a metric graph $\Gamma$. For every point $p \in \Gamma$ there exists a unique $p$-reduced divisor linearly equivalent to $D$. 
\end{thm}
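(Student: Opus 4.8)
The plan is to prove existence and uniqueness by separate arguments, taking uniqueness first since it is the cleaner half: it uses only the maximum principle for piecewise-linear functions together with the definition of $p$-reducedness, whereas existence also needs Riemann--Roch and a termination argument.

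For uniqueness I would argue as follows. Suppose $D_1$ and $D_2$ are both $p$-reduced with $D_1\sim D_2$, say $D_1-D_2=\textrm{div}(f)$, and suppose $f$ is non-constant. Let $X$ be a connected component of the locus where $f$ attains its maximum; as $f$ is piecewise linear with finitely many pieces and non-constant, $X$ is a proper closed connected subgraph of $\Gamma$, so $\partial X\ne\emptyset$. At each $q\in\partial X$ the function $f$ is constant along the tangent directions pointing into $X$ and strictly decreasing along those leaving $X$, hence $\textrm{div}(f)(q)\le-\textrm{outdeg}_X(q)$, so $D_2(q)=D_1(q)-\textrm{div}(f)(q)\ge D_1(q)+\textrm{outdeg}_X(q)$. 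If $q\ne p$ then $D_1(q)\ge 0$ because $D_1$ is effective off $p$, so $q$ is saturated with respect to $X$ and $D_2$. Therefore, if $p\notin X$, then $X$ is a closed connected subset of $\Gamma\setminus\{p\}$ all of whose boundary points are saturated with respect to $D_2$, contradicting $p$-reducedness of $D_2$; hence $f$ attains its maximum at $p$. Exchanging the roles of $D_1$ and $D_2$ (equivalently, replacing $f$ by $-f$) shows that $f$ also attains its minimum at $p$, so $f$ is constant --- a contradiction. Thus $D_1=D_2$.

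For existence I would first note that $|D|$ contains a divisor effective on $\Gamma\setminus\{p\}$: choosing $N$ large enough that $\deg(D)+N\ge g$, so that $D+N(p)$ has non-negative rank by the Riemann--Roch theorem for metric graphs, we get $D+N(p)\sim E$ for some effective $E$, and $D_0:=E-N(p)\sim D$ is effective away from $p$. Let $S\ne\emptyset$ be the set of all $D'\sim D$ that are effective on $\Gamma\setminus\{p\}$. If $D'\in S$ is not $p$-reduced, Dhar's burning algorithm started at $p$ --- fire spreading at unit speed and blocked at a point $q$ as long as at most $D'(q)$ of its tangent directions have burnt --- returns a non-empty closed set $X$ with $p\notin X$ whose boundary points are all saturated with respect to $X$ and $D'$. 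One then fires $X$, replacing $D'$ by $D'+\textrm{div}(f_\varepsilon)$ where $f_\varepsilon$ equals $\varepsilon$ on $X$, equals $0$ outside the $\varepsilon$-neighbourhood of $X$, and is affine in between, with $\varepsilon>0$ small enough to leave $p$ and the other points of $\textrm{supp}(D')$ untouched; the saturation condition is exactly what keeps the result effective off $p$, so one stays in $S$, while the chips on $\partial X$ are moved a short distance towards $p$. Iterating this and controlling it with a monovariant, for instance $\Phi(D')=\sum_q D'(q)\,d(p,q)$ with $d(p,q)$ the distance in $\Gamma$, should yield a $p$-reduced element of $S$; equivalently, one shows that $\Phi$ attains a minimum on $S$ and that a minimiser must be $p$-reduced, the firing step above providing the strict decrease of $\Phi$ that excludes any non-reduced minimiser.

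The hard part is the existence half --- specifically the termination/attainment step. On a finite graph Dhar's algorithm plainly terminates, but on a metric graph chips can move by arbitrarily small amounts, so one must check that the monovariant strictly decreases at each firing (i.e. that firing the Dhar unburnt region genuinely pulls every displaced chip closer to $p$) and rule out an infinite descent; dually, one must arrange the compactness so that minimising sequences in $S$ cannot let mass escape into $p$ --- the coefficient $D'(p)$ is not bounded below over all of $S$, so one restricts to a suitable subset. Getting these estimates right is the delicate point; the rest is the bookkeeping sketched above, and the uniqueness argument is essentially complete as stated.
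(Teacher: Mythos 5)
The paper does not prove this statement at all: it is imported verbatim as Proposition 7.5 of \cite{MZ}, so there is no in-paper argument to compare yours against. Judged on its own terms, your uniqueness half is correct and essentially complete: the maximum-principle argument (a connected component $X$ of the max locus of $f$ has $\textrm{div}(f)(q)\le-\textrm{outdeg}_X(q)$ at every boundary point, forcing all of $\partial X$ to be saturated for $D_2$ unless $p\in X$) is the standard proof, and the reduction to ``$f$ attains both its max and its min at $p$, hence is constant'' is airtight. The first step of the existence half (using Riemann--Roch for metric graphs to produce some $D_0\sim D$ effective off $p$) is also fine.

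The genuine gap is exactly where you flagged it, and it is not merely a matter of ``getting the estimates right'': the monovariant you propose fails. Take $\Gamma$ with vertices $p,q,r$, edges $pq$ and $pr$ of length $1$ and $qr$ of length $10$, and a configuration in which Dhar's algorithm returns $X=\{q\}$ with $D'(q)=2=\textrm{outdeg}_X(q)$. Firing $X$ sends one chip distance $\varepsilon$ toward $p$ and one chip distance $\varepsilon$ into the long edge toward $r$, where $d(p,\cdot)$ \emph{increases}; the net change in $\Phi(D')=\sum_q D'(q)\,d(p,q)$ is zero. So $\Phi$ is not strictly decreasing under the firing step, and the infinite-descent/attainment argument as sketched does not close. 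Moreover, even granting a working potential, you have not produced the compactness needed for a minimiser to exist on (a subset of) $S$, nor shown that a minimiser is $p$-reduced once the strict-decrease claim is gone. To repair this one needs a different control: for instance, first maximise the integer $D'(p)$ over $S$ (possible since it is bounded above by $\deg D$), and then run a more careful potential or lexicographic argument on the resulting subset, or follow the energy-minimisation proofs in the literature (Mikhalkin--Zharkov's original argument via tropical theta functions, or the reduced-divisor treatments of Luo and of Amini). As it stands, the existence half is an outline with a hole at its load-bearing step.
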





\end{subsection}

\begin{subsection}{Harmonic morphisms} 
A morphism $\varphi: \Gamma' \rightarrow \Gamma$ between metric graphs is a continuous map such that for certain vertex and edge sets of $\Gamma$ and $\Gamma'$ it holds that $\varphi(V(\Gamma')) \subseteq V(\Gamma)$, for every edge $e$ of $\Gamma$ it holds that $\varphi^{-1}(e)$ consists of edges of $\Gamma'$, and $\varphi$ restricted to an edge $e' \in E(\Gamma')$ is a dilation by some factor $d_{e'}(\varphi) \in \mathbb{Z}_{\geq 0}$. A morphism is {\sl finite} if $d_{e'}(\varphi) \not =0$ for every edge $e'$. Given $p' \in \Gamma'$, $v' \in T_{p'}(\Gamma')$ a tangent direction and $e' \in E(\Gamma')$ the edge in the direction of $v'$, the  directional derivative $d_{v'}(\varphi)$ of $\varphi$ in the direction $v'$ is defined as $d_{v'}(\varphi) := d_{e'}(\varphi)$. 

Let $p=\varphi(p')$. The morphism $\varphi$ is {\sl harmonic at $p'$} if the integer
\[
d_{p'}(\varphi) = \sum_{\substack{v' \in T_{p'}(\Gamma), \\  \varphi(v') = v}} d_{v'}(\varphi)
\]
is independent of the choice of a tangent direction $v \in T_p(\Gamma)$. The number $d_{p'}(\varphi)$ is called the {\sl degree of $\varphi$ at $p'$}. 

If the morphism $\varphi$ is surjective and harmonic at every $p' \in \Gamma'$, we say that it is {\sl harmonic}. The {\sl degree of $\varphi$} is define as
\[
\textrm{deg}(\varphi) = \sum_{\substack{p' \in \Gamma', \\ \varphi(p') = p}} d_{p'}(\varphi) 
\]
and it does not depend on $p \in \Gamma$.

Let $\varphi: \Gamma' \rightarrow \Gamma$ be a harmonic morphism of metric graphs. The {\sl ramification divisor} of $\varphi$ is the divisor $R = \sum_{p \in \Gamma'} R(p')(p')$ whose coefficient at the point $p' \in \Gamma$ is given by 
\[
R(p') = 2d_{p'}(\varphi)  - 2 - \sum_{v' \in T_{p'}(\Gamma')} \Big( d_{v'}(\varphi) -1 \Big).
\]
\end{subsection}

\begin{subsection}{Specialization} 
Linear systems on curves specialize to linear systems on graphs as explained in \cite{Bak}, see also \cite{AB}. Let $K$ be a algebraically closed field, complete with respect to a non-trivial, non-Archimedean valuation. Let $R$ be the valuation ring and $k$ the residue field. Let $X$ be a smooth, proper, connected curve over $K$ and $X^\textrm{an}$ its analytification. Given a strongly semistable model $\mathfrak{X}$ of $X$ over $R$ such that the special fiber has only irreducible components of genus zero, we can construct the dual graph of the special fiber. Inside $X^{\textrm{an}}$ there is a natural metric graph $\Gamma$. The associated finite graph is the dual graph of the special fiber. There is a retraction map $\tau: X^{\textrm{an}} \rightarrow \Gamma$ that extends by linearity to a homomorphism $\tau_{*}: \textrm{Div}(X) \rightarrow \textrm{Div}(\Gamma)$. The rank of the divisor can only increase: 
\begin{thm}[Specialization Lemma, Corollary 2.10 in \cite{Bak}] Let $D$ be a divisor on $X$. The following inequality holds:
\[
\textrm{rk}_{\Gamma}(\tau_{*}(D)) \geq  \textrm{rk}_{X}(D).
\]
\end{thm}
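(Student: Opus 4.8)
The plan is to deduce the inequality from three standard properties of the retraction map and its induced homomorphism on divisors: (i) $\tau_{*}$ preserves degree and carries effective divisors to effective divisors; (ii) $\tau_{*}$ respects linear equivalence, so it descends to a homomorphism $\textrm{Pic}(X) \to \textrm{Pic}(\Gamma)$; and (iii) the retraction $\tau$ is surjective on $K$-points, so every effective divisor $E$ on $\Gamma$ of degree $r$ is of the form $\tau_{*}(\widetilde{E})$ for some effective $\widetilde{E} \in \textrm{Div}(X)$ of the same degree. Once these are in hand, the inequality follows by a purely formal argument.

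The assembly goes as follows. If $\textrm{rk}_{X}(D) = -1$ there is nothing to prove, so put $r := \textrm{rk}_{X}(D) \geq 0$ and let $E$ be an arbitrary effective divisor on $\Gamma$ with $\textrm{deg}(E) = r$. By (iii) choose an effective $\widetilde{E} \in \textrm{Div}(X)$ with $\textrm{deg}(\widetilde{E}) = r$ and $\tau_{*}(\widetilde{E}) = E$. Since $\textrm{rk}_{X}(D) \geq r$, the system $|D - \widetilde{E}|$ is nonempty, so there is an effective divisor $D' \sim D - \widetilde{E}$ on $X$. By (i) the divisor $\tau_{*}(D')$ is effective, and by (ii) together with linearity of $\tau_{*}$ we get
\[
\tau_{*}(D') \sim \tau_{*}(D - \widetilde{E}) = \tau_{*}(D) - \tau_{*}(\widetilde{E}) = \tau_{*}(D) - E,
\]
so $|\tau_{*}(D) - E| \neq \emptyset$. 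As $E$ ranges over all effective divisors of degree $r$ on $\Gamma$, this yields $\textrm{rk}_{\Gamma}(\tau_{*}(D)) \geq r = \textrm{rk}_{X}(D)$.

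It remains to justify (i)--(iii). Property (i) is immediate from the definition of $\tau_{*}$ as the linear extension of $\tau$, since a closed point of $X$ retracts to a single point of $\Gamma$ and total mass is preserved. Property (iii) holds because $K$ is algebraically closed with nontrivial valuation and $\mathfrak{X}$ is semistable, so every point of the skeleton $\Gamma$ lies in the image of $X(K)$ under the retraction. The substantive point --- and the step I expect to be the main obstacle --- is property (ii): for $f \in K(X)^{\times}$ one must show that $\tau_{*}(\textrm{div}(f))$ is again a principal divisor on the metric graph $\Gamma$. I would argue this via the non-Archimedean Poincar\'e--Lelong (slope) formula: the function $F = -\log|f|$ is piecewise linear on $X^{\textrm{an}}$ with integer slopes; its restriction $F|_{\Gamma}$ is a rational function on $\Gamma$ in the sense of metric graphs --- finiteness of the skeleton and semistability of the model ensure that $F|_{\Gamma}$ has finitely many linear pieces and integer slopes --- and at each $p \in \Gamma$ the sum of the outgoing slopes of $F|_{\Gamma}$ equals $\sum_{x \in X(K),\, \tau(x) = p} \textrm{ord}_{x}(f)$, which is exactly the statement $\textrm{div}(F|_{\Gamma}) = \tau_{*}(\textrm{div}(f))$. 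Equivalently, following Baker's original route, one reduces to a regular semistable model over a discrete valuation ring and identifies the multidegree of a principal divisor on the special fibre with the Laplacian of an integer-valued function on the dual graph, using that the reduction of a nonzero rational function remains a nonzero rational function on each component; this is the content of the cited Corollary~2.10 in \cite{Bak}.
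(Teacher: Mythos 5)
The paper offers no proof of this statement --- it is quoted verbatim from Baker's article --- and your overall strategy (push forward effective and principal divisors, lift test divisors from $\Gamma$ to $X$, conclude formally) is exactly the one used in that source, with the Poincar\'e--Lelong/slope formula correctly identified as the substantive input for your property (ii). However, your property (iii) contains a genuine gap. The retraction $\tau$ restricted to $X(K)$ is \emph{not} surjective onto $\Gamma$ in general: a closed point of $X$ retracts to a point of $\Gamma$ whose position along an edge is rational with respect to the value group $\Lambda = \mathrm{val}(K^{\times})$, and while $\Lambda$ is divisible and dense in $\mathbb{R}$ because $K$ is algebraically closed, it need not be all of $\mathbb{R}$ (for $K = \mathbb{C}_p$ one has $\Lambda = \mathbb{Q}$). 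A point in the interior of an edge at a $\Lambda$-irrational position is a type-3 point of $X^{\mathrm{an}}$ that is its own $\tau$-fibre, so no divisor supported on $X(K)$ pushes forward to it. Consequently your argument only establishes $|\tau_{*}(D) - E| \neq \emptyset$ for $E$ supported on $\Lambda$-rational points of $\Gamma$, which does not a priori bound $\mathrm{rk}_{\Gamma}(\tau_{*}(D))$ from below, since the rank of a divisor on a metric graph is defined by testing against \emph{all} effective divisors of degree $r$.

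The gap is fixable, and closing it is precisely the extra work Baker does in passing from the finite dual graph to the metric graph. One route: the set $\{E \in \mathrm{Sym}^r(\Gamma) : |\tau_{*}(D) - E| \neq \emptyset\}$ is closed, because it is the preimage of the effective locus in $\mathrm{Pic}^{\deg D - r}(\Gamma)$ --- compact, being the continuous image of $\mathrm{Sym}^{\deg D - r}(\Gamma)$ for the compact graph $\Gamma$ --- under the continuous map $E \mapsto [\tau_{*}(D) - E]$; since it contains the dense subset of $\Lambda$-rational effective divisors, it is all of $\mathrm{Sym}^r(\Gamma)$. Alternatively, one can invoke the theory of reduced divisors or rank-determining sets to show that $\Lambda$-rational points suffice to compute the rank. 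Some statement of this kind must be inserted before the formal argument in your second paragraph is complete.
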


\end{subsection}

\end{section}

\begin{section}{Hyperelliptic graphs and curves}\label{specialization}
\begin{subsection}{Hyperelliptic graphs}
In this section we give the definition of hyperelliptic graphs and their main properties following \cite{ABBR2}. In the article the authors work with augmented metric graph: metric graphs with a genus function $g: \Gamma \rightarrow \mathbb{Z}_{\geq 0}$; in this paper we  assume that all the graphs are {\sl totally degenerated}, so the genus function is identically zero. We recall lifting results of hyperelliptic graphs to hyperelliptic curves and we study the specialization of the ramification points. More references are \cite{BN2}, \cite{Cha} and \cite{KY1}.
\begin{defi} A metric graph $\Gamma$ of genus at least $2$ is {\sl hyperelliptic} if it has a $g^1_2$, that is  a linear system of degree $2$ and rank $1$. 
\end{defi} 
The following remark is an easy generalization of Proposition 5.5 in \cite{BN2}, see also \cite{Fac}. 
\begin{rem}
If there exists a $g^1_2$ on an augmented metric graph, then it is unique. 
\end{rem}
The property of a graph being hyperelliptic can also be described using harmonic morphisms, Proposition 4.12 of \cite{ABBR2}, see also \cite{BN2} and \cite{Cha}. \begin{defi} A {\sl minimal graph} is a metric graph with no $1$-valent vertices. 
\end{defi}
 
\begin{thm}\label{hyper} Let $\Gamma$ be a minimal metric graph of genus at least $2$. Then the following are equivalent: 
\begin{enumerate}
\item $\Gamma$ is hyperelliptic;
\item there exists an involution $\iota$ on $\Gamma$ such that $\Gamma / \iota$ is a metric tree;
\item there exists a finite harmonic morphism of degree two to a metric tree $T$, $\varphi: \Gamma \rightarrow T$.
\end{enumerate} 
Furthermore, the hyperelliptic involution is unique. 
\end{thm}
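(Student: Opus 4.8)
The plan is to prove the three equivalences by establishing the cycle of implications $(2)\Rightarrow(3)\Rightarrow(1)\Rightarrow(2)$, and then to deduce the uniqueness statement from the implication $(2)\Rightarrow(1)$ together with the uniqueness of the $g^{1}_{2}$ recorded in the remark above. For the uniqueness: once the cycle is proved, any involution $\iota$ as in (2) yields, by tracing through the constructions, the relation $(x)+(\iota(x))\sim D$ for every $x\in\Gamma$, where $|D|$ is the unique $g^{1}_{2}$; if $\iota_{1},\iota_{2}$ are two such involutions then $(\iota_{1}(x))\sim(\iota_{2}(x))$ for all $x$, and since distinct points of a metric graph of genus $\geq 1$ are never linearly equivalent, $\iota_{1}=\iota_{2}$.

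For $(2)\Rightarrow(3)$ I would take the quotient map $\pi\colon\Gamma\to\Gamma/\iota$, which has degree $2$ because $\iota\neq\textrm{id}$ (as $g(\Gamma)\geq 2$), and equip the tree $\Gamma/\iota$ with the metric in which $\pi$ is a dilation of factor $2$ along the edges fixed pointwise by $\iota$ and of factor $1$ elsewhere. A short case analysis of the tangent directions at a point of $\Gamma$ — distinguishing free points, isolated fixed points, interior points of fixed edges, and vertices at which fixed edges and $\iota$-swapped pairs of edges both occur — then shows that $\pi$ is a finite harmonic morphism of degree $2$; the adjustment of the metric along the fixed locus is precisely what makes $\pi$ harmonic at the mixed vertices. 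For $(3)\Rightarrow(1)$ I would set $D:=\varphi^{*}(q)$ for an arbitrary point $q$ of the target tree $T$, a divisor of degree $\deg(\varphi)=2$. For any $x\in\Gamma$ one has $\varphi^{*}(\varphi(x))\geq(x)$, and since $\textrm{Pic}^{0}(T)=0$ and pullback along a finite harmonic morphism preserves linear equivalence, $\varphi^{*}(\varphi(x))\sim\varphi^{*}(q)=D$; hence $|D-(x)|\neq\emptyset$ for every $x$, so $\textrm{rk}_{\Gamma}(D)\geq 1$. On the other hand $\textrm{rk}_{\Gamma}(D)\leq 1$, since if it were $2$ the Riemann--Roch theorem for graphs would give $\textrm{rk}_{\Gamma}(K_{\Gamma}-D)=g-1\geq 1$, making $D$ special, and Clifford's inequality for graphs would then force $\textrm{rk}_{\Gamma}(D)\leq\deg(D)/2=1$, a contradiction. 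Thus $|D|$ is a $g^{1}_{2}$.

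The substantial implication is $(1)\Rightarrow(2)$. Given a $g^{1}_{2}=|D|$, for each $x\in\Gamma$ there is, since $\textrm{rk}_{\Gamma}(D)\geq 1$, an effective degree-one divisor linearly equivalent to $D-(x)$, and it is unique because $\Gamma$ has positive genus; I would define $\iota(x)$ to be its support, so that $(x)+(\iota(x))$ is the $x$-reduced divisor representing $D$. Then $\iota^{2}=\textrm{id}$ is immediate from uniqueness, and $\iota$ is continuous: if $x_{n}\to x$ then, by compactness of $\Gamma$, a subsequence of $(\iota(x_{n}))$ converges to some $y$, and since linear equivalence is a closed condition — the Abel--Jacobi map into the Hausdorff group $\textrm{Pic}^{0}(\Gamma)$ is continuous — we get $(x)+(y)\sim D$, so $y=\iota(x)$. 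Hence $\iota$ is a homeomorphic involution; the key technical point, for which I would invoke the local analysis of rank-one linear systems via reduced divisors (as in \cite{ABBR2}, see also \cite{BN2,Cha}), is that $\iota$ is moreover a piecewise-linear isometry carrying edges to edges. Granting this, $\pi\colon\Gamma\to\Gamma/\iota$, with the target metric adjusted along the fixed locus as in $(2)\Rightarrow(3)$, is a finite harmonic morphism of degree $2$, and $\pi^{*}(\pi(x))=(x)+(\iota(x))\sim D$ for every $x$.

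It then remains to see that $\Gamma':=\Gamma/\iota$ is a tree. Since every point of $\Gamma'$ is of the form $\pi(x)$ with $\pi^{*}(\pi(x))\sim D$, the image of the Abel--Jacobi map $\Gamma'\to\textrm{Pic}^{0}(\Gamma')$ is contained in the kernel of the pullback homomorphism $\pi^{*}\colon\textrm{Pic}^{0}(\Gamma')\to\textrm{Pic}^{0}(\Gamma)$. As $\pi$ is a finite harmonic morphism of degree $2$, the composite $\pi_{*}\circ\pi^{*}$ is multiplication by $2$ on $\textrm{Pic}^{0}(\Gamma')$, so $\ker\pi^{*}$ lies in the finite $2$-torsion subgroup of $\textrm{Pic}^{0}(\Gamma')$; being the continuous image of a connected space, the image of Abel--Jacobi is then a single point, so all points of $\Gamma'$ are linearly equivalent, $\textrm{Pic}^{0}(\Gamma')=0$, and therefore $g(\Gamma')=0$. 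The main obstacle in the whole argument is the assertion that the a priori merely continuous involution $\iota$ produced from the $g^{1}_{2}$ is actually a piecewise-linear isometry; everything else is bookkeeping with Riemann--Roch and Clifford on graphs, the standard functoriality of pullback and pushforward along finite harmonic morphisms (in particular $\pi_{*}\circ\pi^{*}=\deg\pi$), and the fact that the Jacobian of a metric graph of positive genus is a positive-dimensional, hence infinite and connected, real torus.
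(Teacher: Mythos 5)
The paper does not actually prove this statement: it is quoted from Proposition 4.12 of \cite{ABBR2} (with \cite{BN2} and \cite{Cha} as further references), so there is no in-paper argument to measure yours against. Your implications $(2)\Rightarrow(3)$ and $(3)\Rightarrow(1)$ are sound and follow the standard route: the factor-$2$ dilation along pointwise-fixed edges is exactly the right fix for harmonicity at mixed vertices, and the rank bound via Riemann--Roch and Clifford (or, more directly, via the observation that $\textrm{rk}(D)=2$ with $\deg D=2$ would force all points to be linearly equivalent) is fine, as is the $\ker\pi^{*}\subseteq\textrm{Pic}^{0}(\Gamma')[2]$ argument showing the quotient is a tree.

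The genuine gap is in $(1)\Rightarrow(2)$ and in your uniqueness argument, and it is the same false fact in both places: distinct points of a metric graph of positive genus \emph{can} be linearly equivalent when the graph has bridges, and the hypotheses here (minimal, genus $\geq 2$) do not exclude bridges. Concretely, on the dumbbell graph (two loops joined by a bridge $e$), any two interior points $y,y'$ of $e$ satisfy $(y)\sim(y')$ via the function that is constant off the segment $[y,y']$ and has slope $1$ on it. Consequently, for $x$ on the bridge the point $\iota(x)$ with $(x)+(\iota(x))\sim D$ is not unique (here $D\sim 2(x)$, so $(x)+(y)\sim D$ for \emph{every} $y$ on the bridge): your involution is not well defined as stated, and the deduction $\iota_{1}=\iota_{2}$ from $(\iota_{1}(x))\sim(\iota_{2}(x))$ fails for the same reason. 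The correct route --- and the reason the paper immediately follows this theorem with the lemma on contracting bridges --- is to first reduce to the $2$-edge-connected case, where "distinct points are never linearly equivalent" is true and your construction works, and then transport the involution back across the bridge contractions. Separately, you explicitly punt on the assertion that the continuous involution produced from the $g^{1}_{2}$ is a piecewise-linear isometry; since that is the real content of $(1)\Rightarrow(2)$ and the paper itself outsources the entire theorem to \cite{ABBR2}, citing the literature there is defensible, but it should be recognized as an appeal to the source rather than a proof.
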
 

The property of being hyperelliptic is compatible with the contraction of bridge edges, as shown in Corollary 5.11 of \cite{BN2} and Corollary 3.12 of \cite{Cha}:
\begin{lem} Let $\Gamma$ be a metric graph. Suppose that $\Gamma$ has a bridge and $\Gamma'$ is the graph obtained by contracting the bridge. Then $\Gamma$ is hyperelliptic if and only if $\Gamma'$ is hyperelliptic. 
\end{lem}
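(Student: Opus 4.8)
The plan is to run the argument through the divisorial characterization: $\Gamma$ is hyperelliptic iff it carries a $g^1_2$ (a divisor class of degree $2$ and rank $1$), since this behaves very cleanly under contracting the bridge. Write $e=[u,v]$ for the bridge, so that $\Gamma\setminus\mathring{e}=A\sqcup B$ with $u\in A$, $v\in B$, and let $c\colon\Gamma\to\Gamma'$ be the contraction identifying $u$ and $v$ to one point $w$, so $\Gamma'=A\cup_w B$. Two cheap observations come first: contracting an edge preserves $|E(\Gamma)|-|V(\Gamma)|$, hence $g(\Gamma)=g(\Gamma')$ and in particular the genus-$\ge 2$ hypothesis transfers both ways; and, using the rational function that is constant on $A$, affine of slope $\pm1$ along $e$, and constant on $B$ (truncated appropriately), one checks that on $\Gamma$ every point of the closed edge $e$ is linearly equivalent to $u$.

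Next I would build a pushforward and a section at the level of divisors. Extend $c$ linearly to $c_*\colon\textrm{Div}(\Gamma)\to\textrm{Div}(\Gamma')$; a telescoping computation of the slopes of $f$ along $e$ shows $c_*(\textrm{div}(f))=\textrm{div}(\bar f)$, where $\bar f$ is $f$ restricted to $A$ and to $B$, each normalized to vanish at $w$, so $c_*$ descends to a degree-preserving homomorphism $\textrm{Pic}(\Gamma)\to\textrm{Pic}(\Gamma')$. Conversely define $s\colon\textrm{Div}(\Gamma')\to\textrm{Div}(\Gamma)$ by $(w)\mapsto(u)$ and $(p)\mapsto(p)$ on the remaining points; then $s$ preserves degree and, crucially, effectivity, one has $c_*\circ s=\textrm{id}$ on $\textrm{Div}(\Gamma')$, and $s\circ c_*$ induces the identity on $\textrm{Pic}(\Gamma)$ (it moves each point of $e$ to $u$, which is harmless up to linear equivalence by the previous step).

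The heart of the proof is then the rank identity $\textrm{rk}_\Gamma(D)=\textrm{rk}_{\Gamma'}(c_*D)$ for every $D\in\textrm{Div}(\Gamma)$. For ``$\ge$'', let $r=\textrm{rk}_{\Gamma'}(c_*D)\ge 0$ and take any effective $E$ of degree $r$ on $\Gamma$; then $c_*E$ is effective of degree $r$, so some effective $F'\sim c_*D-c_*E$ exists on $\Gamma'$, and $s(F')$ is effective on $\Gamma$ with $s(F')\sim D-E$ (because $s\circ c_*$ is the identity on $\textrm{Pic}(\Gamma)$), whence $|D-E|\ne\emptyset$. For ``$\le$'', let $r=\textrm{rk}_\Gamma(D)\ge 0$ and take any effective $E'$ of degree $r$ on $\Gamma'$; then $s(E')$ is effective of degree $r$ on $\Gamma$, so $|D-s(E')|\ne\emptyset$, and pushing forward with $c_*\circ s=\textrm{id}$ gives $|c_*D-E'|\ne\emptyset$. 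Combined with degree preservation, $D$ is a $g^1_2$ on $\Gamma$ exactly when $c_*D$ is a $g^1_2$ on $\Gamma'$; together with $g(\Gamma)=g(\Gamma')$ this is the lemma.

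The step I expect to be delicate is exactly this rank identity: one cannot just quote ``$c_*$ is an isomorphism of Picard groups'', because the rank depends on effective representatives and not merely on divisor classes, so the argument genuinely needs the explicit effectivity-preserving section $s$ together with the compatibilities $c_*\circ s=\textrm{id}$ and $s\circ c_*\sim\textrm{id}$. (One could instead argue via the involution characterization of Theorem~\ref{hyper}: a hyperelliptic involution of $\Gamma$ acts as $-1$ on $H_1(\Gamma)$, which forces it to fix the bridge $e$ pointwise, so it descends to an involution of $\Gamma'$ with tree quotient, and conversely an involution of $\Gamma'$ must fix the cut vertex $w$ and lifts back to $\Gamma$; but that route needs a comparable amount of case analysis and the $-1$-on-$H_1$ input, so the divisorial route seems cleaner.)
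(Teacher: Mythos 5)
Your proposal is correct in substance, but you should be aware that the paper does not actually prove this lemma: it simply cites Corollary 5.11 of \cite{BN2} and Corollary 3.12 of \cite{Cha}, where the statement is established via the characterization of hyperelliptic graphs by involutions and degree-two quotient maps. Your divisorial argument is therefore a genuinely self-contained alternative, and it is the natural one given that the paper's own definition of hyperelliptic is the existence of a $g^1_2$: you never have to touch Theorem \ref{hyper}, and the key rank identity $\textrm{rk}_\Gamma(D)=\textrm{rk}_{\Gamma'}(c_*D)$ is exactly the right invariant to transport. The genus bookkeeping, the fact that all points of the closed bridge are linearly equivalent, the telescoping computation showing $c_*$ carries principal divisors to principal divisors, and the two directions of the rank comparison via the effectivity-preserving section $s$ are all sound.

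One step is under-justified. In the ``$\ge$'' direction you pass from $F'\sim c_*D-c_*E$ on $\Gamma'$ to $s(F')\sim D-E$ on $\Gamma$, and the reason you give in parentheses --- that $s\circ c_*$ induces the identity on $\textrm{Pic}(\Gamma)$ --- is not sufficient by itself: that identity lets you replace $s(c_*(D-E))$ by $D-E$, but to compare $s(F')$ with $s(c_*(D-E))$ you first need $s$ to respect linear equivalence, i.e.\ to descend to a homomorphism $\textrm{Pic}(\Gamma')\to\textrm{Pic}(\Gamma)$. This is true and easy --- given a rational function $g$ on $\Gamma'$, extend it to $\Gamma$ by declaring it constant equal to $g(w)$ on the bridge; then $\textrm{div}(\tilde g)$ pushes forward to $\textrm{div}(g)$ and differs from $s(\textrm{div}(g))$ only by redistributing the coefficient at $w$ between $u$ and $v$, which is harmless since $(u)\sim(v)$ --- but it is a separate verification from the two compatibilities you list, and the proof is incomplete without it. With that one sentence added, your argument closes. (A final pedantic remark: if the bridge is an infinite edge, the auxiliary rational functions of slope $\pm1$ along $e$ need the usual care with infinite vertices, but this does not affect the structure of the argument.)
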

From now on we assume that all the bridge edges are contracted. We consider a minimal hyperelliptic graph $\Gamma$ and we indicate with $\varphi$ the finite harmonic morphism of degree two to a metric tree, $\varphi: \Gamma \rightarrow T$. We study its ramification divisor $R$. 

For every direction $v \in T_p(\Gamma)$ we can have $d_v(\varphi) = 1$ or $d_v(\varphi) =2$ since the morphism is finite and of degree two. We are supposing that the bridges are contracted, therefore it cannot exist a direction $v$ for which $d_v(\varphi) = 2$. The coefficient of $R$ at $p$ is  
\[ R(p) = 2d_p(\varphi) -2 -  \sum_{v \in T_p(\Gamma)}\Big( d_v(\varphi)-1 \Big) = 2 d_p(\varphi) - 2.
\] 
It follows that $R(p) = 2$ for $d_p(\varphi)=2$, and  $R(p) = 0$ for $d_p(\varphi)=1$.

\begin{defi} \label{rp}
 A point of a hyperelliptic graph at which the ramification divisor has coefficient two is called a {\sl ramification point}.
\end{defi}
See Figure \ref{tcfig} for a graphical representation of ramification points. 
\begin{figure}[h]
\centering
\includegraphics[width=2cm,height=3cm]{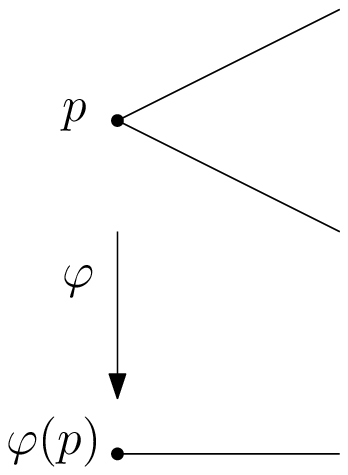} 
\qquad \qquad 
\includegraphics[width=2.5cm,height=4cm]{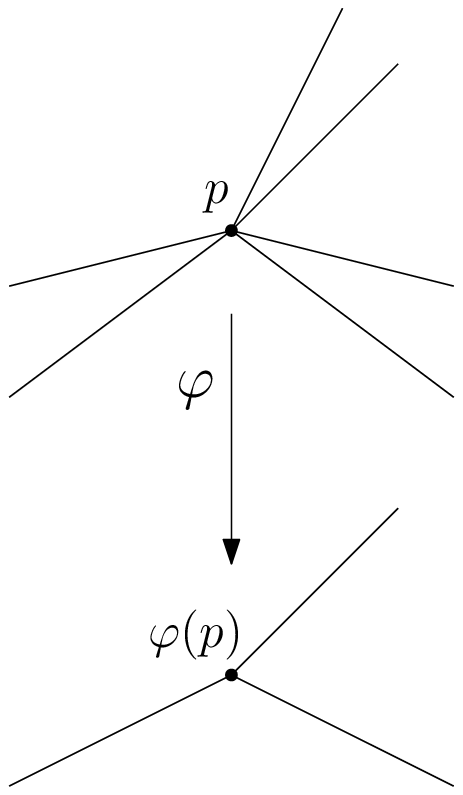} 
\label{tcfig}
\caption{The harmonic moprhism $\varphi$ at a ramification point~$p$ with val$(\varphi(p))=1$ and val$(\varphi(p))=3$.}
\end{figure}
\begin{prop}
Let $\Gamma$ be a minimal hyperelliptic graph and $\varphi: \Gamma \rightarrow T$ its finite harmonic morphism of degree two to a metric tree. Then $\Gamma$ has $g+1$ ramification points. 
\end{prop}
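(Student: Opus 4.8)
The plan is to compute the degree of the ramification divisor $R$ using a Riemann--Hurwitz-type formula for harmonic morphisms, and then extract the number of ramification points from the fact that each contributes exactly $2$ to $\deg(R)$. Recall from the discussion just above that, under our standing hypothesis that all bridge edges are contracted, every tangent direction $v$ has $d_v(\varphi)\in\{1,2\}$ but in fact $d_v(\varphi)=1$ always, so that $R(p)=2d_p(\varphi)-2$; hence $R(p)=2$ precisely at the points with $d_p(\varphi)=2$ (the ramification points) and $R(p)=0$ elsewhere. Therefore $\deg(R) = 2\cdot(\text{number of ramification points})$, and it suffices to show $\deg(R) = 2g+2$.

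First I would invoke the Riemann--Hurwitz formula for finite harmonic morphisms of metric graphs (as in \cite{ABBR2}, and already available in the setting of \cite{BN2},\cite{Cha}), which in the totally degenerate case reads
\[
2g(\Gamma) - 2 \;=\; \deg(\varphi)\bigl(2g(T) - 2\bigr) + \deg(R).
\]
Since $T$ is a metric tree we have $g(T)=0$, and since $\varphi$ has degree two this becomes $2g-2 = 2\cdot(-2) + \deg(R)$, i.e. $\deg(R) = 2g+2$. Combining with $\deg(R) = 2\cdot\#\{\text{ramification points}\}$ from the previous paragraph gives exactly $g+1$ ramification points, as claimed.

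If one prefers a self-contained argument avoiding a direct citation of Riemann--Hurwitz, the alternative is to pass to the combinatorial picture: choose compatible vertex sets so that $\varphi$ is a morphism of finite graphs, and count using $g(\Gamma) = |E(\Gamma)| - |V(\Gamma)| + 1$, $g(T) = |E(T)| - |V(T)| + 1 = 0$, together with the edge- and vertex-counting relations coming from $\deg(\varphi)=2$: over each edge of $T$ there lie either two edges of $\Gamma$ each of dilation factor $1$ (unramified edge) or one edge of dilation $1$ over which the two sheets are glued (impossible here, as this would force a direction of degree $2$), so $|E(\Gamma)| = 2|E(T)|$; over each vertex $p$ of $T$ the fiber has $\sum_{p'} d_{p'}(\varphi) = 2$, so it consists of either two points with $d_{p'}(\varphi)=1$ or one point with $d_{p'}(\varphi)=2$, the latter being a ramification point. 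Writing $n$ for the number of ramification points, $|V(\Gamma)| = 2|V(T)| - n$, and substituting into the genus formula yields $g = (2|E(T)|) - (2|V(T)| - n) + 1 = 2(|E(T)| - |V(T)| + 1) + n - 1 = n - 1$, hence $n = g+1$.

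The main obstacle is largely bookkeeping rather than conceptual: one must ensure the chosen vertex/edge sets of $\Gamma$ and $T$ are simultaneously adapted to $\varphi$ (so that $\varphi(V(\Gamma))\subseteq V(T)$, preimages of edges are unions of edges, and all ramification points lie in $V(\Gamma)$), and one must correctly rule out, using the no-bridge hypothesis, the degenerate local behavior where an entire edge has dilation factor $2$ — this is exactly the point already settled in the paragraph preceding the proposition, so it can be cited rather than re-proved. With those preliminaries in place the count is immediate.
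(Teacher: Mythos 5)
Your main argument is correct and is essentially the paper's own proof: both invoke the graph-theoretic Riemann--Hurwitz formula to get $\deg(R)=2g+2$ and then divide by the local contribution of $2$ at each ramification point. The supplementary combinatorial count you sketch is also sound, but it is not needed beyond the cited formula.
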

\begin{proof}
It follows from the graph-theoretic analogue of the Riemann-Hurwitz formula (see Theorem 2.9 of \cite{BN2} and Section 2.14 of \cite{ABBR1}) that the degree of the ramification divisor is $2g+2$. Therefore there are $g+1$ points for which the coefficient is two. 
\end{proof}

\begin{prop}
Let $r_i$ be a ramification point of a hyperelliptic graph $\Gamma$. Then $2(r_i) = g^1_2$.
\end{prop}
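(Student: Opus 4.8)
The plan is to identify the complete linear system $|2(r_i)|$ with the $g^1_2$; by the Remark recalled above the $g^1_2$ is the unique linear system of degree $2$ and rank $1$, so it is enough to show that $|2(r_i)|$ has degree $2$ and rank exactly $1$.

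First I would use the local structure of $\varphi$ at the ramification point together with pullback of divisors. Recall that for $q \in T$ the pullback divisor is $\varphi^{*}(q) = \sum_{p \in \varphi^{-1}(q)} d_{p}(\varphi)\,(p)$, and that $\varphi^{*}$ carries a principal divisor $\textrm{div}(f)$ to $\textrm{div}(f \circ \varphi)$, hence preserves linear equivalence (see \cite{BN2}, \cite{ABBR1}). Set $q_i := \varphi(r_i)$. Since $r_i$ is a ramification point, $d_{r_i}(\varphi) = 2 = \textrm{deg}(\varphi)$, and because every point in a fiber has local degree at least $1$, the fiber over $q_i$ reduces to $\{r_i\}$; therefore $\varphi^{*}(q_i) = 2(r_i)$. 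As $T$ is a tree, any two of its points are linearly equivalent, so $2(r_i) = \varphi^{*}(q_i) \sim \varphi^{*}(q)$ for every $q \in T$.

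It then remains to compute the rank of $2(r_i)$. For the lower bound, given $p \in \Gamma$ put $q = \varphi(p)$: since $p \in \varphi^{-1}(q)$ we have $\varphi^{*}(q) - (p) \geq 0$, and combined with $2(r_i) \sim \varphi^{*}(q)$ this gives $|2(r_i) - (p)| \neq \emptyset$; hence $\textrm{rk}_{\Gamma}(2(r_i)) \geq 1$. For the upper bound, the rank of a divisor never exceeds its degree, so $\textrm{rk}_{\Gamma}(2(r_i)) \leq 2$, and rank $2$ is impossible: it would force $2(r_i) \sim (p) + (r_i)$, hence $(p) \sim (r_i)$, for every $p \in \Gamma$, making all points of $\Gamma$ linearly equivalent and $\textrm{Pic}^{0}(\Gamma) = 0$, contrary to $g \geq 2$. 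Thus $|2(r_i)|$ is a $g^1_2$, and by uniqueness $2(r_i) = g^1_2$. The only steps demanding a little care are the identity $\varphi^{*}(q_i) = 2(r_i)$, resting solely on $d_{r_i}(\varphi) = \textrm{deg}(\varphi)$, and the compatibility of $\varphi^{*}$ with linear equivalence — both standard for finite harmonic morphisms of degree two.
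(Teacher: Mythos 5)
Your proof is correct and follows essentially the same route as the paper: both identify $2(r_i)$ with the pullback $\varphi^{*}(\varphi(r_i))$, deduce $\textrm{rk}(2(r_i))\geq 1$ from properties of pullbacks under the harmonic morphism (the paper cites Proposition 4.2 of \cite{ABBR2} where you argue directly), and rule out higher rank because it would force $\Gamma$ to have genus zero. Your version merely spells out the intermediate steps that the paper delegates to references.
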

\begin{proof}
The ramification points $r_i$ of $\Gamma$ are such that rk$(2(r_i)) = 1$. In fact, each divisor $2(r_i)$ is the pull-back of the image point of $r_i$ in the metric tree: by Proposition 4.2 in \cite{ABBR2} the rank is at least $1$. Moreover the rank cannot be larger because this would imply that the genus of $\Gamma$ is zero. 

\end{proof}

\end{subsection}

\begin{subsection}{Lifting of hyperelliptic graphs}
Let $K$ be a algebraically closed field, complete with respect to a non-trivial, non-Archimedean valuation, such that char$(K) \not =2$ and char$(k) \not= 2$, with $k$ the residue field. 
Let $\Gamma$ be a minimal hyperelliptic metric graph, so there exists a finite harmonic morphism $\varphi: \Gamma \rightarrow T$ to a tree of degree two. In what follows we will briefly explain how this morphism can be lifted to a finite morphism $\varphi: X \rightarrow \mathbb{P}^1$ of degree two obtaining a hyperelliptic curve $X$. The morphism has $2g+2$ ramification points $R_1, R_2, \dots , R_{2g+2}$. We will study the specialization of these ramification points to the metric graph $\Gamma$. For the definition of lifting of a morphism and more results in this context we refer to \cite{ABBR1} and \cite{ABBR2}.
\begin{thm} \label{ramification} Let $\Gamma$ be a minimal hyperelliptic graph and $\varphi: \Gamma \rightarrow T$ a finite harmonic morphism of degree two to a tree. This morphism can be lifted to a finite morphism $\varphi: X \rightarrow \mathbb{P}^1$ lifting it. The $2g+2$ ramification points on $X$ specialize in pairs to the $g+1$ ramification points on $\Gamma$. 
\end{thm}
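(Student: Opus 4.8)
The plan is to realise everything inside a common semistable model and to follow the $2g+2$ classical ramification points under the retraction map. First I record the lift: since $\mathrm{char}(K)\neq 2$ and $\mathrm{char}(k)\neq 2$, a degree-two harmonic morphism is tamely ramified, so the lifting results of \cite{ABBR2} produce a finite degree-two morphism $\varphi\colon X\to\mathbb{P}^1$ with $X$ a smooth hyperelliptic curve of genus $g$. I take the lift so that $\Gamma$ is a skeleton of $X^{\mathrm{an}}$, that $X$ is totally degenerate, and that $\varphi^{\mathrm{an}}$ restricts to the given $\varphi\colon\Gamma\to T$ on skeleta and commutes with the retractions $\tau\colon X^{\mathrm{an}}\to\Gamma$ and $\tau_T\colon(\mathbb{P}^1)^{\mathrm{an}}\to T$, that is $\varphi\circ\tau=\tau_T\circ\varphi^{\mathrm{an}}$. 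Since $\mathrm{char}(K)\neq 2$ the cover $\varphi\colon X\to\mathbb{P}^1$ has $2g+2$ ramification points $R_1,\dots,R_{2g+2}\in X(K)$, each of ramification index two, so that $\mathrm{Ram}(\varphi)=R_1+\dots+R_{2g+2}$.

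Next I show that every $R_i$ retracts to a ramification point of $\Gamma$. The hyperelliptic involution $\sigma$ of $X$ --- so that $\varphi$ is the quotient $X\to X/\sigma=\mathbb{P}^1$ and $R_1,\dots,R_{2g+2}$ are its fixed points --- induces an automorphism of $X^{\mathrm{an}}$ preserving the minimal skeleton $\Gamma$, hence an involution $\iota$ of $\Gamma$; by the uniqueness statement in Theorem~\ref{hyper}, $\iota$ is the hyperelliptic involution of $\Gamma$, and since the bridges are contracted $\varphi\colon\Gamma\to T$ is identified with the quotient $\Gamma\to\Gamma/\iota$. Thus $\iota$ fixes a point $p$ if and only if $\varphi^{-1}(\varphi(p))=\{p\}$, which --- the degree being two --- forces $d_p(\varphi)=2$; by the computation preceding Definition~\ref{rp} this says precisely that $p$ is one of the ramification points $r_1,\dots,r_{g+1}$. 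As $\tau$ is $\sigma$-equivariant, $\tau\circ\sigma^{\mathrm{an}}=\iota\circ\tau$, and $\sigma(R_i)=R_i$, so $\iota(\tau(R_i))=\tau(R_i)$ and hence $\tau(R_i)\in\{r_1,\dots,r_{g+1}\}$ for all $i$. Therefore $\tau_{*}(\mathrm{Ram}(\varphi))=\sum_{j=1}^{g+1}a_j\,(r_j)$ with $a_j\in\mathbb{Z}_{\geq 0}$ and $\sum_j a_j=2g+2$, and the theorem reduces to proving $a_j=2$ for every $j$.

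The core of the argument is this count, which I carry out on the model. Fix $j$ and refine the semistable models so that $r_j$ and $s_j:=\varphi(r_j)$ are vertices, keeping total degeneracy, genus-zero components and a finite morphism $\varphi\colon\mathfrak{X}\to\mathfrak{Y}$ of semistable models of $X$ and $\mathbb{P}^1$. Let $C_j\cong\mathbb{P}^1_k$ be the component of the special fiber of $\mathfrak{Y}$ lying over $s_j$, and $D_j$ the component of the special fiber of $\mathfrak{X}$ above it. Because $d_{r_j}(\varphi)=2$, $D_j$ is irreducible, $D_j\to C_j$ has degree two, and $D_j$ has genus zero. Since $\varphi\colon\Gamma\to T$ admits no direction $v$ with $d_v(\varphi)=2$ (the bridges being contracted), the morphism of special fibers is unramified at every node, so $D_j\to C_j$ is unramified at the nodes of $D_j$ and its ramification locus consists of smooth points of the special fiber; by Riemann--Hurwitz for $D_j\to C_j$ (using $\mathrm{char}(k)\neq 2$ and $g(D_j)=g(C_j)=0$) this locus has degree exactly two. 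On the other hand $\varphi\colon\mathfrak{X}\to\mathfrak{Y}$ is finite flat and tamely ramified, so formation of its ramification divisor commutes with restriction to the special fiber; thus the closure of $\mathrm{Ram}(\varphi)$ in $\mathfrak{X}$ restricts on $D_j$ to the ramification divisor of $D_j\to C_j$, of degree two. For each $i$ with $\tau(R_i)=r_j$ the closure of $R_i$ is a section meeting $D_j$ transversally in a single smooth point, so the number of such $i$ equals this degree, namely two; hence $a_j=2$. Summing over $j$ gives $\tau_{*}(\mathrm{Ram}(\varphi))=2\sum_{j=1}^{g+1}(r_j)$, which is the assertion.

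I expect the main obstacle to be exactly this last step: producing a semistable model at once compatible with $\varphi$, totally degenerate, and regular enough that the ramification points reduce to smooth points with transverse intersections, together with the clean statement that the tame ramification divisor specializes to the ramification divisor of the morphism of special fibers. A possibly more elementary variant is to argue downstairs: one shows that the $2g+2$ branch points of $\varphi$ on $\mathbb{P}^1$ specialize in pairs to the $g+1$ points $s_j=\varphi(r_j)$ of $T$, and then transports the statement upstairs along $\varphi$, using that the fibers of $\varphi\colon\Gamma\to T$ have degree two, so that distinct ramification points of $\Gamma$ have distinct images in $T$ and $s_j$ pulls back to $r_j$ with multiplicity two.
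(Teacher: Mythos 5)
Your argument is correct in substance but follows a genuinely different route from the paper. The paper gets the pairing essentially for free from the construction of the lift: following \cite{ABBR1,ABBR2}, one first modifies $\varphi:\Gamma\to T$ by attaching two infinite legs at each $r_i$ (mapped with degree two to legs at $\varphi(r_i)$), because the local model of the lift at a ramification point is a degree-two map $\mathbb{P}^1\to\mathbb{P}^1$ with exactly two ramification points in residue characteristic $\neq 2$; the lifted curve then has its ramification points sitting over the ends of these legs, which retract to $r_i$, so the ``two per $r_i$'' is built into the construction. You instead take the lift as given and locate the ramification points a posteriori, in two steps: the equivariance of the retraction under the hyperelliptic involution (a nice, clean observation that pins every $\tau(R_i)$ to the fixed locus of $\iota$, i.e.\ to $\{r_1,\dots,r_{g+1}\}$), followed by a Riemann--Hurwitz count on each component $D_j\to C_j$ of the special fibers to force $a_j=2$. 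Your version proves something slightly stronger --- it applies to any lift, not just the one produced by the modification --- but it leans on the compatibility you yourself flag as the main obstacle: that the tame ramification divisor of $\varphi:\mathfrak{X}\to\mathfrak{Y}$ restricts on $D_j$ to the ramification divisor of $D_j\to C_j$, with the sections $\overline{R_i}$ meeting $D_j$ reducedly. That statement is true in the tame case (it is essentially the behaviour of the different in \cite{ABBR1}, or can be checked on the formal fibres: a degree-two map of open balls in residue characteristic $\neq 2$ that is residually ramified has exactly one critical point), but as written it is asserted rather than proved, whereas the paper's choice of lift sidesteps it entirely. Two small points worth making explicit in your step two: you should note that $\iota$ cannot be the identity on $\Gamma$ (otherwise $\Gamma\cong T$ would be a tree, contradicting $g\geq 2$), and that after your refinement every $r_j$ is a vertex, so no $R_i$ can reduce to a node.
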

\begin{proof} By Section 3 of \cite{ABBR2} we know that if $p$ is not a ramification point, locally at this point the lift is given by the identity map $\varphi: \mathbb{P}^1 \rightarrow \mathbb{P}^1$. Instead if $p$ is a ramification point the local model is a map $\varphi: \mathbb{P}^1 \rightarrow \mathbb{P}^1$ of degree $2$. This morphism has two ramification and two branch points. By results in \cite{ABBR1} and \cite{ABBR2}, we modify the morphism $\varphi: \Gamma \rightarrow T$ by adding two infinite edges at the point $r$ that we map to two infinite edges added at $\varphi(r)$ with degree along these edges equal to two. The morphism $\varphi: \Gamma \rightarrow T$ can be lifted to a finite morphism of degree two $\varphi: X \rightarrow \mathbb{P}^1$, such that the ramification points $R_i$ on $X$ are the points that are mapped to the infinite vertices coming from the modification of $\Gamma$. The infinite vertices retract to the ramification points~$r_i$ on $\Gamma$, therefore, the ramification points on $X$ specialize in pairs to the ones on $\Gamma$.
\end{proof}
\end{subsection}
\end{section}

\begin{section}{Theta characteristics} \label{thetacharacteristics}
Now we focus on theta characteristics of hyperelliptic curves and graphs. 

As we said before any hyperelliptic curve $X$ of genus $g$ has a $g^1_2 = |D|$ and a degree two morphism $\varphi: X \rightarrow \mathbb{P}^1$ with $2g+2$ ramification points $R_1, R_2, \dots , R_{2g+2}$. 

Every theta characteristic on $X$ is of the form $\mathcal{L} = \mathcal{O}_X(E)$ where
\[E = m D+ R_{i_1} + \cdots + R_{i_{g-1-2m}},\]
with $-1\leq m \leq \frac{g-1}{2}$ and the points $R_{i_j}$ are distinct. 

This representation is unique for $m\geq 0$, while if $m =-1$ there is the following relation 
\begin{equation} \label{equivalenza} -D + R_{i_1} + \cdots R_{i_{g+1}} \sim -D + R_{j_1} + \cdots + R_{j_{g+1}}, \end{equation}
if $\{i_1, \dots, i_{g+1}, j_1, \dots, j_{g+1}\} = \{ 1, 2, \dots, 2g+2\}$. 

Moreover $h^0(X, \mathcal{L})=m+1$, see \cite{Mum}, \cite{Mum2} and \cite{ACGH}.

\bigskip
Let $\Gamma$ be a metric graph of genus $g$ and $K_{\Gamma}$ its canonical divisor. 
\begin{defi} A divisor class $[F] \in \textrm{Pic}^{g-1}(\Gamma)$ is called {\sl theta characteristic} if $2[F] = [K_{\Gamma}]$. 
\end{defi}
\begin{rem}
There are $2^g$ theta characteristics on the graph $\Gamma$, as proved by Zharkov in \cite{Zha}. 
\end{rem} 
We give similar characterizations for theta characteristics on hyperelliptic graphs. Let $\Gamma$ a hyperelliptic graph of genus $g \geq 2$ with $r_1, \cdots, r_{g+1}$ ramification points, and the linear system $g^1_2 =|D|=  |2(r_i)| $. Without loss of generality we can set $D = 2r_1$.

We consider divisors of the form 
\begin{equation}\label{tc}  E = m D + r_{i_1} + \cdots + r_{i_{g-1-2m}}, \end{equation}
with $-1\leq m \leq \frac{g-1}{2}$ and the points $r_{i_j}$ are distinct. 

\begin{prop}
Let $E$ be a divisor as described in (\ref{tc}). The divisor class $[E]$ is a theta characteristics.\end{prop}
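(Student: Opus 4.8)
The plan is to reduce the statement to the single identity $[K_\Gamma] = (g-1)[D]$ in $\textrm{Pic}^{g-1}(\Gamma)$, which is the graph-theoretic analogue of the fact that the canonical class of a hyperelliptic curve equals $(g-1)$ times its $g^1_2$. Granting this, the verification is immediate. First, since $\deg D = 2$,
\[
\deg E = 2m + (g-1-2m) = g-1,
\]
so $[E] \in \textrm{Pic}^{g-1}(\Gamma)$. Next, by the preceding proposition $2(r_{i_j}) \sim D$ for every ramification point, and hence
\[
2[E] = 2m[D] + \sum_{j=1}^{g-1-2m} 2[r_{i_j}] = 2m[D] + (g-1-2m)[D] = (g-1)[D] = [K_\Gamma],
\]
which is exactly the defining condition of a theta characteristic. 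Note that this computation is uniform in $m$ and in particular handles the case $m=-1$.

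It remains to establish $[K_\Gamma] = (g-1)[D]$, which I would obtain from the Riemann--Hurwitz formula for finite harmonic morphisms (Theorem 2.9 of \cite{BN2}, cf.\ Section 2.14 of \cite{ABBR1}) applied to $\varphi\colon \Gamma \to T$, namely $K_\Gamma \sim \varphi^{*}K_T + R$. We have already computed the ramification divisor $R = \sum_{i=1}^{g+1} 2(r_i)$, so $[R] = (g+1)[D]$. Since $T$ is a metric tree, $\textrm{Pic}^0(T) = 0$, and thus $K_T$ is linearly equivalent to $-2$ times any point of $T$; because $\varphi^{*}$ preserves linear equivalence and any two points of $T$ are linearly equivalent, every fiber $\varphi^{*}(q)$ is linearly equivalent to $\varphi^{*}(\varphi(r_1)) = 2(r_1) = D$, whence $\varphi^{*}K_T \sim -2D$. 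Therefore $[K_\Gamma] = \varphi^{*}[K_T] + [R] = -2[D] + (g+1)[D] = (g-1)[D]$, and the degrees are consistent since $\deg K_\Gamma = 2g-2 = (g-1)\deg D$.

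The only point requiring a little care is the identification $\varphi^{*}K_T \sim -2D$: one must use that the pullback map $\varphi^{*}\colon \textrm{Div}(T) \to \textrm{Div}(\Gamma)$ descends to Picard groups and that the fiber over an arbitrary point of $T$, including the leaves, is still linearly equivalent to $D$. These are standard properties of finite harmonic morphisms from \cite{ABBR1} and \cite{ABBR2}, so no genuine obstacle arises and the remainder of the argument is purely formal. Alternatively, one may simply quote the relation $K_\Gamma \sim (g-1)\,g^1_2$ for hyperelliptic graphs directly and bypass the Riemann--Hurwitz computation.
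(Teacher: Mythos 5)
Your argument is correct and follows essentially the same route as the paper: both reduce $2E$ to $(g-1)D$ via the relation $2(r_{i_j})\sim D$ from the preceding proposition, and then invoke $(g-1)D\sim K_\Gamma$. The only difference is that the paper simply cites Corollary 1 of \cite{Fac} for that last equivalence, whereas you derive it yourself from the Riemann--Hurwitz formula $K_\Gamma=\varphi^{*}K_T+R$ together with $R=\sum_{i}2(r_i)\sim(g+1)D$ and $\varphi^{*}K_T\sim-2D$; that derivation is sound and makes the step self-contained.
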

\begin{proof}
We need to prove that 
\[ 2E = 2m D + 2r_{i_1} + \cdots + 2r_{i_{g-1-2m}} \sim K. \]
By Proposition we know that $2(r_i) \sim 2(r_j)$ for every $i, j$. Therefore we obtain $2E \sim (g-1)D \sim K$ (for the last equivalence see Corollary 1 of \cite{Fac}). 
\end{proof}

\begin{prop} Let $[F]$ be a theta characteristic. Then $F$ is linearly equivalent to a unique divisor $E$ described in (\ref{tc}).
\end{prop}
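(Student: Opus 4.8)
The plan is to mirror the curve-theoretic argument, but replace statements about $h^0$ with rank computations on $\Gamma$ and use the structure of the hyperelliptic linear system $g^1_2=|D|$. Let $[F]$ be a theta characteristic, so $2[F]=[K_\Gamma]$. I would first establish the \emph{existence} of a representative of the desired shape, then deal with \emph{uniqueness}.

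\emph{Existence.} Since $\Gamma$ is hyperelliptic, the $g^1_2$ defines the harmonic morphism $\varphi\colon\Gamma\to T$, and every ramification point $r_i$ satisfies $2(r_i)\sim D$. The strategy is to run a ``reduction modulo $D$'' procedure on $F$. Choosing $r_1$ as a base point, I would look at the $r_1$-reduced divisor $F_0$ linearly equivalent to $F$; using the Riemann--Roch theorem for graphs (or directly the fact, recalled in the excerpt, that $(g-1)D\sim K_\Gamma$) one gets that $F_0-D$ or $F_0+D$ stays in the range of degrees coming from a representation (\ref{tc}) with the appropriate $m$. More precisely, since $2F\sim(g-1)D$, after adding/subtracting copies of $D$ we may reduce to the case $0\le\deg(\text{effective part})\le g-1$ and the parity of $g-1-2m$ is forced. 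The key input is that an effective divisor of small degree supported generically on $\Gamma$ that is ``half of $(g-1)D$'' must, after moving within its linear system, be supported on ramification points: here I would invoke Proposition 4.2 of \cite{ABBR2} together with the description of $|D|$ (its rank is $1$, its members are the fibers $\varphi^{-1}(t)$), so that any effective divisor $A$ with $2A\sim$ (a multiple of $D$) and no two points in a common fiber must consist of ramification points, because a non-ramification point $p$ forces a second point $p'\ne p$ in its fiber and then $A$ would be non-reduced in a way incompatible with being the reduced half of a multiple of $D$. Iterating peels off the maximal number $m$ of copies of $D$ and leaves $r_{i_1}+\cdots+r_{i_{g-1-2m}}$ with distinct ramification points.

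\emph{Uniqueness.} Suppose $mD+\sum_{j} r_{i_j}\sim m'D+\sum_j r_{i'_j}$, both of the form (\ref{tc}). Subtracting, $(m-m')D\sim\sum r_{i'_j}-\sum r_{i_j}$. If $m=m'=-1$ one is exactly in the situation of relation (\ref{equivalenza}): $-D+\sum_{k=1}^{g+1}r_{i_k}\sim -D+\sum_{k=1}^{g+1}r_{j_k}$ when the two index sets partition $\{1,\dots,2g+2\}$ --- but on the graph there are only $g+1$ ramification points, so the ``$m=-1$'' case collapses: $g-1-2m=g+1>g+1$ is impossible, hence $m=-1$ never occurs and the representation is always with $m\ge 0$, where I claim it is unique. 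For $m,m'\ge 0$, I would argue that $m=m'$ by comparing ranks: $\mathrm{rk}_\Gamma(mD+\sum r_{i_j})=m$, which follows because $mD$ already gives rank $m$ (the $g^1_2$ has rank $1$ and adding fibers is additive in rank for these special divisors) while the rank cannot exceed $m$ since that would force extra sections of $(g-1)D$ contradicting $g\ge 2$ --- this is the same mechanism used in the proof that $2(r_i)$ has rank exactly $1$. Once $m=m'$, cancel $mD$ and reduce to showing $\sum r_{i_j}\sim\sum r_{i'_j}$ (distinct ramification points, at most $g-1$ of them) forces equality of the multisets; this is where I expect the main obstacle, and I would handle it by passing to the tree: $\varphi_*$ sends each $r_{i_j}$ to a distinct leaf-type point $\varphi(r_{i_j})$ of $T$, linear equivalence on a tree is trivial (every degree-$0$ divisor is principal, and $\mathrm{Pic}^0(T)=0$ only up to the subtlety that we need the divisors to have the \emph{same degree}, which they do), so $\sum\varphi(r_{i_j})$ and $\sum\varphi(r_{i'_j})$ must be literally equal as divisors on $T$, and since $\varphi$ is injective on the set of ramification points this gives $\{i_j\}=\{i'_j\}$. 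The one technical point to check carefully is that linear equivalence on $\Gamma$ pushes forward to linear equivalence on $T$ under a finite harmonic morphism (true, since $\varphi_*\circ\mathrm{div}=\mathrm{div}\circ\varphi_*$ on rational functions), which closes the argument.
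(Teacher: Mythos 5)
Your proposal contains two genuine errors that break it. First, in the uniqueness step you dismiss the case $m=-1$ by asserting that $g-1-2m=g+1>g+1$ is impossible; but $g+1$ is not greater than $g+1$, it is equal to it, so the divisor $-D+(r_1)+\cdots+(r_{g+1})$ is a perfectly valid instance of (\ref{tc}) --- indeed it is the unique non-effective theta characteristic of $\Gamma$, so the $m=-1$ case is not vacuous but essential. (Unlike on the curve, there is no relation (\ref{equivalenza}) to worry about on the graph, since there is only one way to choose all $g+1$ ramification points.) Second, your push-forward argument for uniqueness is backwards: because $T$ is a tree, every degree-zero divisor on $T$ is principal, so \emph{any} two divisors of the same degree on $T$ are linearly equivalent; $\varphi_*$ therefore destroys exactly the information you need, and linear equivalence of $\sum\varphi(r_{i_j})$ and $\sum\varphi(r_{i'_j})$ on $T$ never forces them to be equal as divisors. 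Finally, the existence step hinges on the unproved claim that an effective ``half of a multiple of $D$'' can be moved onto ramification points; that claim is the real content of the statement and is not supplied.

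The paper's proof avoids all of this. It shows that every divisor of the form (\ref{tc}) is $r_1$-reduced: for a closed connected $X\subseteq\Gamma\setminus\{r_1\}$ one picks a boundary point $x$ of $X$ whose image is a boundary point of $\iota(X)$ in $T$; then $\mathrm{outdeg}_X(x)\geq 1$, and if $x$ is a ramification point each outgoing direction downstairs is covered by two tangent directions at the fixed point $x$, so $\mathrm{outdeg}_X(x)\geq 2$; in either case $x$ is non-saturated. Uniqueness of the $r_1$-reduced representative in a divisor class then shows that distinct expressions (\ref{tc}) are pairwise non-equivalent, and the count $\sum_{m}\binom{g+1}{g-1-2m}=2^g$, matched against Zharkov's count of $2^g$ theta characteristics, yields existence with no constructive reduction needed. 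If you want to rescue your route, the reducedness computation is also precisely the ingredient you would need to finish your own uniqueness step.
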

\begin{proof} We start by showing that the divisors described in (\ref{tc}) are $r_1$-reduced. Let $X$ be a closed connected subset of $\Gamma \setminus \{r_1\}$.  We need to show that it contains a non-saturated boundary point. Consider the hyperelliptic involution $\iota$ on $\Gamma$ given by Theorem \ref{hyper} and let $T$ be the metric tree $\Gamma / \iota$. The image of $X$ through $\iota$ must have non-empty boundary in $T \setminus \{\iota(x)\}$. Therefore we can choose a point $x$ in the boundary of $X$, such that $\iota(x)$ is a boundary point of $\iota(X)$. From the inequality $\textrm{outdeg}_{\iota(X)}(\iota(x))\geq 1$, it follows that $\textrm{outdeg}_X(x) \geq 1$. Moreover if the point $x$ is a ramification point, then $\textrm{outdeg}_X(x) \geq 2$. This holds because the ramification points are fixed by the involution and every outgoing direction of $\iota(X)$ at $\iota(x)$ is covered by two tangent directions at $x$. In both cases we have $D(x) < \textrm{outdeg}_X(x)$, so we can conclude that $x$ is non-saturated. 
%

From this it follows that two divisors given by two different expressions in (\ref{tc}) are not linearly equivalent. 

\vspace{\baselineskip}
 We show that there are $2^g$ different divisors as in (\ref{tc}), proving in this way that all the theta characteristic need to be of that form, since they are $2^g$. The number of divisors is given by 
 \[
  \sum_{m=-1}^{\lfloor\frac{g-1}{2}\rfloor} \binom{g+1}{g-1-2m}.
   \]
   Remember that  $\sum_{i=0}^n \binom{n}{i} = 2^n$, and $\sum_{i=0}^{\lfloor\frac{n}{2}\rfloor} \binom{n}{2i} = 2^{n-1}$, we obtain  
 \[
   \sum_{m=-1}^{\lfloor\frac{g-1}{2}\rfloor} \binom{g+1}{g-1-2m} =  2^g.  
   \]

\end{proof}

\begin{prop} The rank of the divisors $E = m D + r_{i_1} + \cdots + r_{i_{g-1-2m}}$ is~$m$.
\end{prop}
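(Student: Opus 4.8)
The plan is to prove $\textrm{rk}_\Gamma(E)=m$ by establishing the two inequalities $\textrm{rk}_\Gamma(E)\ge m$ and $\textrm{rk}_\Gamma(E)\le m$. The case $m=-1$ is immediate: then $E$ is one of the divisors of the form (\ref{tc}), hence $r_1$-reduced by the previous proposition, and its coefficient at $r_1$ is $-1$, so it is not effective and $\textrm{rk}_\Gamma(E)=-1=m$. From now on I assume $m\ge 0$ and write $E=mD+A$ with $A=(r_{i_1})+\cdots+(r_{i_{g-1-2m}})\ge 0$, an effective divisor supported on distinct ramification points. For the lower bound, note $\textrm{rk}_\Gamma(D)=1$ since $|D|$ is a $g^1_2$; given an effective divisor $G$ of degree $m$, I split it into $m$ points and apply $|D-p|\neq\emptyset$ repeatedly to obtain $|mD-G|\neq\emptyset$, and adding the effective divisor $A$ gives $|E-G|\neq\emptyset$. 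As $G$ is arbitrary, $\textrm{rk}_\Gamma(E)\ge m$.

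For the upper bound I would argue by contradiction: suppose $\textrm{rk}_\Gamma(E)\ge m+1$, so $|E-F|\neq\emptyset$ for every effective $F$ of degree $m+1$, and I will exhibit one $F$ for which this fails. The divisor $E$ meets at most $g-2m$ of the $g+1$ ramification points (namely $r_1$ and the $r_{i_j}$), so at least $2m+1$ ramification points lie outside $\textrm{supp}(E)$; a short case analysis (the tight case being $m=0$, where exactly two ramification points lie outside the support) shows one can choose $m+1$ of them, say $s_1,\dots,s_{m+1}$, all different from $r_1$. Put $F=(s_1)+\cdots+(s_{m+1})$. Using $2(s_k)\sim D=2(r_1)$, hence $-(s_k)\sim(s_k)-2(r_1)$, one computes
\[
E-F\;\sim\;-2(r_1)+(r_{i_1})+\cdots+(r_{i_{g-1-2m}})+(s_1)+\cdots+(s_{m+1})\;=:\;E'.
\]
Away from $r_1$ the divisor $E'$ is effective and supported only on ramification points, each with coefficient at most $1$ (the $r_{i_j}$ are distinct, the $s_k$ are distinct, and the $s_k$ avoid $\textrm{supp}(A)$), while at $r_1$ its coefficient is negative.

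It then remains to show that $E'$ is $r_1$-reduced, which I would do by repeating the argument in the proof of the previous proposition: for a closed connected $X\subseteq\Gamma\setminus\{r_1\}$, the hyperelliptic involution $\iota$ and the quotient tree $T=\Gamma/\iota$ produce a boundary point $x$ of $X$ with $\textrm{outdeg}_X(x)\ge 1$, and with $\textrm{outdeg}_X(x)\ge 2$ whenever $x$ is a ramification point (each tangent direction of $\iota(X)$ at $\iota(x)$ being covered by two tangent directions at $x$). Since $E'(x)\le 1$ when $x$ is a ramification point and $E'(x)=0$ otherwise, $x$ is non-saturated with respect to $X$ and $E'$ in either case, so $E'$ is $r_1$-reduced. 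Being $r_1$-reduced with a negative coefficient at $r_1$, $E'$ is not effective, and by uniqueness of the $r_1$-reduced representative $|E-F|=|E'|=\emptyset$, contradicting $\textrm{rk}_\Gamma(E)\ge m+1$. Hence $\textrm{rk}_\Gamma(E)=m$.

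I expect the main obstacle to be purely bookkeeping rather than conceptual: verifying that enough ramification points are available outside $\textrm{supp}(E)$ and can be taken distinct from $r_1$ (the case $m=0$ is the delicate one), and observing that although $E'$ is not literally a divisor of the form (\ref{tc}), its support away from $r_1$ still consists solely of ramification points of multiplicity $\le 1$ — which is precisely what the reducedness argument of the previous proposition uses.
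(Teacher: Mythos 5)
Your proof is correct and follows essentially the same strategy as the paper: the lower bound via $E\geq mD=2m(r_1)$, and the upper bound by subtracting $m+1$ ramification points off the support of $E$, using $2(r_j)\sim D$ to rewrite $E-F$ as a reduced divisor with a negative coefficient. The only cosmetic difference is that you reduce with respect to $r_1$ (getting coefficient $-2$ there), while the paper keeps one subtracted point $r_{j_{m+1}}$ un-converted and reduces with respect to it (coefficient $-1$); both rest on the same extension of the reducedness argument from the preceding proposition, which you rightly flag as needing the slightly more general form.
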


\begin{proof} It is easy to see that $\textrm{rk}(E) \geq m$, since $E \geq mD = 2m(r_1)$. To show that the rank cannot be bigger is enough to choose a set $\{j_1, \dots, j_{m+1}\}$ such that the divisor 
\[F = \sum_{k=1}^{m+1} (r_{j_k})\]
has disjoint support from $E$.  
The divisor $E-F$ is not equivalent to any effective divisor, because 
\[ 
E - F \sim r_{i_1} + \cdots + r_{i_{g-1-2m}} + \sum_{k=1}^{m} (r_{j_k}) - (r_{j_{m+1}}),
\]
and the the divisor on the right-hand side is reduced with respect to $r_{j_{m+1}}$. 
\end{proof}
\end{section}

\begin{section}{Specialization of Theta characteristics} \label{5}
From Section \ref{specialization} we know that every minimal hyperelliptic graphs of genus at least two, with all the bridge edges contracted has a morphism of degree two to a tree that can be lifted to a morphism of a hyperelliptic curve $X$ to the projective line. The curve has $2g+2$ ramification points which specialize in pairs to the $g+1$ ramification points on the graph. Given a theta characteristic on the graph, we want to understand how many theta characteristics on the curve specialize to it. 

\begin{thm} \label{counting}
Let $\Gamma$ be a hyperelliptic graphs  of genus $g\geq 2$ with all the bridge edges contracted. Let $X$ be the hyperelliptic curve constructed by lifting the harmonic morphism $\varphi: \Gamma \rightarrow T$. There are $2^g$ even theta characteristic on $X$ specializing to the unique non-effective theta characteristic on~$\Gamma$. For every effective theta characteristic on $\Gamma$ there are $2^{g-1}$ odd and $2^{g-1}$ even theta characteristics on $X$ specializing to it. 
 
\end{thm}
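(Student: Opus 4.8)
The plan is to pass to the explicit combinatorial descriptions of theta characteristics from Sections~\ref{specialization} and~\ref{thetacharacteristics} and to compute the image of each representative under the specialization map $\tau_*$. Write $D$ for a divisor of the $g^1_2$ on $X$ and $D_\Gamma$ for one on $\Gamma$; recall $D\sim 2R_1$ and $D_\Gamma\sim 2(r_1)$. Using Theorem~\ref{ramification} I would first relabel the ramification points $R_1,\dots,R_{2g+2}$ of $X$ so that $\{R_{2j-1},R_{2j}\}$ retracts to $r_j$, so that $\tau_*(R_{2j-1})=\tau_*(R_{2j})=(r_j)$. Since $\tau_*$ sends principal divisors to principal divisors (\cite{Bak}) it descends to $\mathrm{Pic}(X)\to\mathrm{Pic}(\Gamma)$, and $\tau_*(D)\sim D_\Gamma$. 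Hence for a theta characteristic $\mathcal L=\mathcal O_X\bigl(mD+R_{i_1}+\dots+R_{i_{g-1-2m}}\bigr)$, letting $s$ be the number of indices $j$ with both $R_{2j-1},R_{2j}$ among the chosen points and $U$ the set of indices $j$ with exactly one chosen, the relation $2(r_j)\sim D_\Gamma$ gives
\[
\tau_*(\mathcal L)=\Bigl[(m+s)D_\Gamma+\sum_{j\in U}(r_j)\Bigr],
\]
a divisor of the shape \eqref{tc} with parameter $m'=m+s$; by the uniqueness statement in Section~\ref{thetacharacteristics} this is exactly the theta characteristic of $\Gamma$ attached to $(m',U)$, and $\mathcal L$ is odd or even according to the parity of $h^0(X,\mathcal L)=m+1$, i.e. $\mathcal L$ is odd iff $m$ is even.

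\textbf{Fibre over the non-effective theta characteristic.} This class has $m'=-1$, which forces $|U|=g+1$, hence $U=\{1,\dots,g+1\}$, $s=0$ and $m=-1$. So I would argue that the theta characteristics of $X$ mapping to it are precisely the $\mathcal O_X(-D+\sum_{i\in I}R_i)$ with $I$ meeting each pair $\{R_{2j-1},R_{2j}\}$ in exactly one point; there are $2^{g+1}$ such $I$, the only coincidence being $I\leftrightarrow I^{c}$ from \eqref{equivalenza} (and $I^{c}$ is again of this type, and $I\ne I^{c}$), so we get $2^{g+1}/2=2^{g}$ of them, all with $m=-1$ and hence even.

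\textbf{Fibre over an effective theta characteristic} $[E']$, $E'=m'D_\Gamma+\sum_{k\in K}(r_k)$ with $m'\ge0$ and $|K|=g-1-2m'$. By the first step, $\mathcal L$ specializes to $[E']$ iff $U=K$ and $m+s=m'$. For fixed $m\in\{-1,0,\dots,m'\}$ this amounts to choosing the "complete-pair" index set $S$ of size $m'-m$ inside the $2(m'+1)$-element set $\{1,\dots,g+1\}\setminus K$ and, for each $j\in K$, one of $R_{2j-1},R_{2j}$: that is $\binom{2(m'+1)}{m'-m}\,2^{g-1-2m'}$ choices, pairwise inequivalent for $m\ge0$, while for $m=-1$ the involution $I\mapsto I^{c}$ of \eqref{equivalenza} preserves this fibre and is fixed-point-free (it swaps $S$ with the empty-pair set and flips each singleton), halving the count. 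Summing over $m$ of each parity and writing $n=m'+1$, the number of odd (resp. even) characteristics over $[E']$ comes out as $2^{g-1-2m'}$ times $\sum_{\ell\le n-1,\ \ell\equiv n-1\ (2)}\binom{2n}{\ell}$ (resp. $\sum_{\ell\le n-1,\ \ell\equiv n\ (2)}\binom{2n}{\ell}+\tfrac12\binom{2n}{n}$), and both bracketed sums equal $4^{n-1}$ — this follows from $\sum_\ell\binom{2n}{\ell}=4^{n}$, $\sum_\ell(-1)^\ell\binom{2n}{\ell}=0$ and $\binom{2n}{\ell}=\binom{2n}{2n-\ell}$ — so each count is $2^{g-1-2m'}\cdot 2^{2m'}=2^{g-1}$, as claimed.

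\textbf{Main obstacle.} The delicate part is the last step: getting the book-keeping right, in particular correctly accounting for the single relation \eqref{equivalenza} on the $m=-1$ stratum (showing $I\mapsto I^{c}$ preserves each fibre and has no fixed points) and then evaluating the two alternating-type binomial sums to the common value $4^{n-1}$. One must also be a little careful in the first step that $\tau_*$ is genuinely well defined on $\mathrm{Pic}$ and is computed by the displayed formula via $\tau_*(D)\sim D_\Gamma$ and $2(r_j)\sim D_\Gamma$.
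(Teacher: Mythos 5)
Your proposal is correct and follows essentially the same route as the paper: classify theta characteristics on $X$ by the representatives $mD+R_{i_1}+\cdots+R_{i_{g-1-2m}}$, push forward using the pairing of ramification points from Theorem~\ref{ramification} and $2(r_j)\sim D_\Gamma$, stratify the fibre by the number of complete pairs, and halve the $m=-1$ stratum via \eqref{equivalenza}. Your write-up is somewhat more careful than the paper's on two points it leaves implicit --- that the complementation involution on the $m=-1$ stratum preserves each fibre and is fixed-point-free, and the explicit evaluation of the two parity-restricted binomial sums to $4^{n-1}$ --- but the underlying argument is the same.
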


\begin{proof}
The only non-effective theta characteristic on the graph is given by the divisor 
\[
 E = -(r_1) + (r_2) + \cdots + (r_{g+1}). 
\]
For every point $r_i$ there are two ramification points in the curve specializing to it, so there are $2^{g+1}$ divisors specializing to it. By the linear equivalence in (\ref{equivalenza}) this number needs to be divided by $2$. Therefore there are $2^g$ theta characteristics specializing to $E$. These theta characteristics are even, since $m=-1$. 

\vspace{\baselineskip}
Now let $E$ be an effective theta characteristic. We write 
\[
 E = 2m (r_1)  + (r_{i_1}) + \cdots + (r_{i_{g-1-2m}}), \ \textrm{with} \ m \not = -1. 
\]
Again for each ramification point $r_{i_1}, r_{i_2}, \cdots, r_{i_{g-1-2m}}$ in the support, there are two ramification points on the curve that can specialize to it. This gives us $2^{g-1-2m}$ possible divisors specializing to $(r_{i_1}) + \cdots + (r_{i_{g-1-2m}})$

Now we focus on the divisors that specialize to $2m(r_i)$. A divisor equivalent to $2m(r_1)$ can be obtained by the specialization of $m \,g^1_2$ on the curve, but, for example, also from the specialization of $(m-1)\,g^1_2$ and a pair of ramification points that specialize to the same point $r_j$ on the graph, since $2(r_j) \sim g^1_2$. 

We have that the theta characteristics on $X$ specializing to $E$ are of the form 
\[
E' = jg^1_2 + \sum_{h=1}^i T_h + (R_{i_1}) + \cdots + (R_{i_{g-1-2m}})
\]
where $-1 \leq j \leq m$, and $j+i= m$, and with the notation $T_h$ we indicate a pair of ramification points on the curve specializing to the same ramification point on the graph not in the support of $E$, and with $R_{i_j}$ a point specializing to $r_{i_j}$. For each $j$ there are $ \binom{2m+2}{i}$ divisors specializing to~$jg^1_2 + \sum_{h=1}^i T_h$. In the case $i=m+1$ the number of divisors needs again to be divided by $2$ because of the linear equivalence in (\ref{equivalenza}).  

Putting these computations together we obtain that given a theta characteristic on a graph, the number of theta characteristics specializing to it is  
\begin{equation*}\begin{split}
 2^{g-1-2m}\left(\sum_{i=0}^{m} \binom{2m+2}{i} + \frac{1}{2}\binom{2m+2}{m+1}\right) &= 2^{g-1-2m}\left(\frac{1}{2}\sum_{i=0}^{2m+2} \binom{2m+2}{i} \right) \\ &= 2^{g-1-2m} \cdot 2^{2m+2-1} = 2^g. 
\end{split}\end{equation*}

\bigskip

 We know that $h^0 (X,\mathcal{O}_X(E')) = j+1$, with $E'$ the theta characteristic above. 
Using again the fact that $\sum_{i=0}^{\lfloor \frac{n}{2}\rfloor} \binom{n}{2i} = 2^{n-1}$, we can see that the number of even theta characteristics specializing to it is $2^{g-1}$ and the number of odd theta characteristics specializing to it is $2^{g-1}$. 
\end{proof}
\end{section}

\bibliographystyle{amsalpha}
\bibliography{bibliografia}
\end{document}